\newtheorem{theorem}{Theorem}[section]
\newtheorem{corollary}[theorem]{Corollary}
\newtheorem{lemma}[theorem]{Lemma}
\newtheorem{conjecture}[theorem]{Conjecture}
\newtheorem{proposition}[theorem]{Proposition}
\theoremstyle{definition}
\newcommand{\ZZ}{\ensuremath{\mathbb{Z}}}
\newcommand{\RR}{\ensuremath{\mathbb{R}}}
\newcommand{\cg}{\ensuremath{\mathcal{G}}}
\newcommand{\vb}{\ensuremath{\mathbf{b}}}
\newcommand{\vu}{\ensuremath{\mathbf{u}}}
\newcommand{\vh}{\ensuremath{\mathbf{h}}}
\newcommand{\va}{\ensuremath{\mathbf{a}}}
\newcommand{\vx}{\ensuremath{\mathbf{x}}}
\def \< {\langle}
\def \> {\rangle}
\begin{document}

\title[A Walsh-Fourier approach to circulant Hadamards]{A Walsh-Fourier approach to the circulant Hadamard conjecture}

\author[M. Matolcsi]{M. Matolcsi}
\address{M. M.: Alfr\'ed R\'enyi Institute of Mathematics,
Hungarian Academy of Sciences POB 127 H-1364 Budapest, Hungary
Tel: (+361) 483-8307, Fax: (+361) 483-8333}
\email{matolcsi.mate@renyi.mta.hu}

\thanks{The author was supported bt OTKA grant No.\! 109789 and by ERC-AdG 321104.}

\begin{abstract}
We describe an approach to the circulant Hadamard conjecture based on Walsh-Fourier analysis. We show that the existence of a circulant Hadamard matrix of order $n$ is equivalent to the existence of a non-trivial solution of a certain homogenous linear system of equations. Based on this system, a possible way of proving the conjecture is proposed.
\end{abstract}

\maketitle

\bigskip

\section{introduction}

A {\it real Hadamard matrix} is a square matrix with $\pm 1$ entries such that the rows (and thus columns) are pairwise orthogonal. A {\it circulant (or cyclic) matrix} $C$ is a square matrix which is generated by the cyclic permutations of a row vector, i.e. there exists a vector $\vx=(x_1, \dots x_{n})$ such that $c_{i,j}=x_{j-i+1}$ for $1\le i, j\le n$ (the difference being reduced mod $n$ to the set $\{1, \dots , n\}$; we prefer to use the indices $1, \dots, n$ rather than $0, \dots, n-1$).

\medskip

It is trivial to check that the $4\times 4$ circulant matrix generated by the row vector $(-1, 1,1,1)$ is Hadamard. However, no circulant Hadamard matrix of order larger than 4 is known. The following famous conjecture was made by Ryser \cite{ryser}, more than 50 years ago :

\begin{conjecture}{\rm{(Circulant Hadamard conjecture)}}
For $n>4$ there exists no $n\times n$ circulant real Hadamard matrix.
\end{conjecture}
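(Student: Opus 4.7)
The plan is to reformulate the existence of a circulant Hadamard matrix of order $n$ as a statement about Walsh-Fourier coefficients of the generating sign vector, and then to extract from this reformulation a homogeneous linear system that admits no non-trivial solution when $n>4$.

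First I would record the standard reductions. If $C$ is a circulant Hadamard matrix with first row $\vx=(x_1,\dots,x_n)\in\{\pm 1\}^n$, then $CC^T=nI$ is equivalent to the vanishing of every non-trivial periodic autocorrelation $\sum_{i=1}^n x_i x_{i+k}=0$ for $k\not\equiv 0\pmod n$. Taking the discrete Fourier transform on $\ZZ_n$ yields $|\widehat{\vx}(j)|^2=n$ for every $j\neq 0$, so $n$ must be a perfect square; classical arguments of Turyn further force $n=4u^2$ with $u$ odd, pinning down the $2$-adic part of $n$ to exactly $4$.

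Next I would pass to Walsh-Fourier coordinates in order to exploit the $\pm 1$ structure directly. Writing $x_i=(-1)^{y_i}$ with $y_i\in\{0,1\}$ encodes $\vx$ as a point $\vec{y}\in\ZZ_2^n$, and every monomial $x_{i_1}x_{i_2}\cdots x_{i_r}$ becomes the value of a Walsh character at $\vec{y}$. Rewriting the autocorrelation constraints in this language expresses the Hadamard condition as equalities among Walsh-Fourier coefficients of the indicator function of $\{i:x_i=-1\}$. After grouping the constraints according to the factorization $n=4u^2$ and exploiting the circulant symmetry (which cuts the number of independent unknowns dramatically), I would expect to arrive at a finite homogeneous linear system whose non-trivial $\pm 1$-solutions are in bijection with circulant Hadamard matrices of order $n$; this matches exactly what the abstract of the paper promises.

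The conclusion would then follow by showing that this system has only the trivial solution for $n>4$, and this is the main obstacle — essentially equivalent to the conjecture itself. My line of attack would be to combine the Walsh-Fourier linear constraints with the strong integrality restrictions available on the $\ZZ_n$-side (in particular Turyn's self-conjugacy relations on $|\widehat{\vx}(j)|$ at prime-power frequencies) to force a rank contradiction: the Walsh side should impose enough linear relations that any putative solution clashes with the arithmetic conditions from $\ZZ_n$. The earlier steps are essentially bookkeeping, but proving full rank for all admissible $n=4u^2>4$ seems to demand genuinely new input, probably a combinatorial or number-theoretic mechanism distinguishing the single case $n=4$ from every larger candidate. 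This last step is where I would expect the hard work, and presumably where the paper stops short of a complete proof and instead formulates a concrete sufficient condition.
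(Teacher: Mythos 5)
There is a genuine gap, but it is the unavoidable one: the statement you are asked to prove is Ryser's conjecture itself, which the paper does not prove either --- it only reformulates it and proposes a strategy. Your outline correctly anticipates that reformulation (autocorrelation conditions $\sum_j u_ju_{j+d}=0$ rewritten as linear constraints on the Walsh characters $M(\gamma)=\vu^\gamma$, leading to a homogeneous system whose triviality is equivalent to the conjecture), and you honestly flag that the full-rank step is where all the difficulty lives. So the review is not that you took a wrong turn, but that the proposal, like the paper, stops exactly at the point where a proof would have to begin.

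One technical point where your sketch is weaker than the paper's actual content: you claim the non-trivial $\pm 1$-solutions of the linear system are in bijection with circulant Hadamard matrices. That version is essentially a tautological restatement and buys nothing. The paper's Lemma on the system \eqref{Mtiling} is stronger: \emph{any} non-trivial real solution forces the existence of a circulant Hadamard matrix. This is proved by observing that a solution $M$ of the auxiliary system \eqref{Stiling} means the convolution $S\ast M\equiv 0$ on $\hat\cg$, so $\hat S$ vanishes somewhere on $\cg$, which is precisely a $\pm 1$ vector $\vu$ with all autocorrelations zero. It is this strengthening that makes the dual ``witness'' formulation (Corollary \ref{witness}) available: non-existence becomes the existence of real weights $c_{\gamma,d}$ expressing $M(0)$ as a combination of the constraints. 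Your proposed next step --- importing Turyn's self-conjugacy restrictions from the $\ZZ_n$-side to force a rank contradiction --- is a different (and untested) continuation from the paper's, which instead averages the witness weights over the symmetry group and works in the $(\frac n2+1)$-dimensional space of symmetric expressions, recovering only the classical fact that $n$ must be $4u^2$. Neither continuation is carried to completion, so the conjecture remains unproven in both accounts.
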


The first significant result concerning this conjecture was made by R. J. Turyn \cite{turyn} using arguments from algebraic number theory. He proved that if a circulant Hadamard matrix of order $n$ exists then $n$ must be of the form $n=4u^2$ for some odd integer $u$ which is not a prime-power. The most powerful breakthroughs were later obtained by the "field descent method" of B. Schmidt \cite{schmidt1, schmidt2} and its extensions by K. H. Leung and B. Schmidt \cite{ls1, ls2}. Currently, the smallest open case is $n=4u^2$ with $u=11715$, and there are less than 1000 remaining open cases in range $u\le 10^{13}$.

\medskip

In this note we offer a more elementary approach to the circulant Hadamard conjecture, based on Walsh-Fourier analysis.

\section{A Walsh-Fourier approach}

The approach described in this note is inspired by the results of \cite{mubmols}, where a Fourier analytic approach to the problem of mutually unbiased bases (MUBs) was presented. The basic idea is that the Fourier transform is capable of turning non-linear conditions into linear ones.

\medskip

We briefly introduce the necessary notions and notations here. Let $\ZZ_2$ denote the cyclic group of order $2$, and let $\cg = \ZZ_2^n$. An element of $\cg$ will be regarded as a column vector of length $n$ whose entries are $\pm 1$. And vice versa, each such column vector will be regarded as an element of $\cg$. Accordingly, an $n\times n$ matrix $A$ containing $\pm 1$ entries will be regarded as an $n$-element subset of $\cg$, the columns of $A$ being the elements. We will use (Walsh)-Fourier analysis on $\cg$. Let $\hat{\cg}$ denote the dual group. Then $\hat{\cg}$ is isomorphic to $\ZZ_2^n$ and an  element $\gamma$ of $\hat{\cg}$ will be identified with a row vector containing 0-1 entries. The action of a character $\gamma = (\gamma_1, \dots \gamma_{n} )\in \hat{\cg}$ on an element $\vx=(x_1, \dots  x_{n})\in \cg$ is defined as $\gamma(\vx)=\vx^\gamma=x_1^{\gamma_1} \dots x_n^{\gamma_n}$. We will also use the notation $\hat{\cg}_0$ for the subgroup of elements $\gamma\in \hat{\cg}$ such that $\gamma_1 + \gamma_2 +\dots + \gamma_n \equiv 0$ (mod $2$).

\medskip

Let $A$ be any  $n\times n$ matrix containing $\pm 1$ entries, and let $\va_1, \dots ,\va_n$ denote the columns of $A$. The Fourier transform of (the indicator function of) $A$ will be defined as $\hat{A} (\gamma)=\sum_{j=1}^n \gamma (\va_j)=\sum_{j=1}^n \va_j^\gamma$. This is our main object of study. Notice here that

\begin{equation}\label{eq1}
|\hat{A} (\gamma)|^2=\sum_{j,k=1}^n (\va_j/\va_k)^\gamma ,
\end{equation}
where the quotient $\va_j/\va_k$ is understood coordinate-wise, i.e. $\va/\vb =(a_1/b_1, \dots , a_n/b_n)$. (As long as we work with $\pm 1$ entries the operation division can be replaced by multiplication, but we prefer to use division in the notation because it can also be used in the more general context of complex Hadamard matrices.)

\medskip

To illustrate the use of the Fourier transform $\hat{A} (\gamma)$, let me include here a neat proof of the fact that an $n\times n$ Hadamard matrix can only exist if 4 divides $n$. There is an easy combinatorial proof of this fact, but i believe that the Fourier proof is the "book proof".

\begin{proposition}\label{had}
If an $n\times n$ real Hadamard matrix exists, then 4 divides n, or $n=1,2$.
\end{proposition}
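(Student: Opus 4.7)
The plan is to translate orthogonality of rows into vanishing of $\hat A$ on weight-$2$ characters, and then sum $\hat A$ over a $4$-element subgroup of $\hat\cg$ to directly read off $4\mid n$.

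First I would record the key Fourier dictionary entry: for $\gamma=\ve_{i_1}+\ve_{i_2}\in\hat\cg$ with $i_1\neq i_2$ (so $|\gamma|=2$), unwinding the definition yields $\va_j^{\gamma}=a_{i_1 j}a_{i_2 j}$, so
\[ \hat A(\gamma)=\sum_{j=1}^n a_{i_1 j}a_{i_2 j} \]
is precisely the inner product of rows $i_1$ and $i_2$ of $A$. For a Hadamard matrix this vanishes, so $\hat A(\gamma)=0$ for every weight-$2$ character $\gamma$.

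Next, the cases $n=1,2$ are handled by direct inspection. For $n\geq 3$, I would pick any three distinct row indices, say $1,2,3$, and form the subgroup
\[ H=\{\gamma_0,\ \ve_1+\ve_2,\ \ve_1+\ve_3,\ \ve_2+\ve_3\}\subset\hat\cg, \]
which has order $4$. Summing $\hat A$ over $H$, the weight-$2$ vanishing gives $\sum_{\gamma\in H}\hat A(\gamma)=\hat A(\gamma_0)=n$. Computing the same quantity by swapping the order of summation and using character orthogonality on $H$,
\[ \sum_{\gamma\in H}\hat A(\gamma)=\sum_{j=1}^n\sum_{\gamma\in H}\gamma(\va_j)=4\cdot\bigl|\{j:\va_j\in H^{\perp}\}\bigr|, \]
where $H^{\perp}=\{\vx\in\cg:x_1=x_2=x_3\}$ is the annihilator of $H$ in $\cg$. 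Equating the two expressions gives $n=4\cdot|\{j:\va_j\in H^{\perp}\}|$, which forces $4\mid n$.

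The only step needing care is the character-sum identity $\sum_{\gamma\in H}\gamma(\vx)=|H|\cdot 1_{H^{\perp}}(\vx)$, but this is an immediate consequence of Pontryagin duality on $\cg$: the map $\gamma\mapsto\gamma(\vx)$ is a homomorphism $H\to\{\pm 1\}$, trivial exactly when $\vx\in H^{\perp}$ and otherwise non-trivial (so its sum over $H$ vanishes). Modulo this one-line invocation, the entire argument reduces to choosing the right subgroup $H$, which is dictated by the weight-$2$ vanishing applied to any three rows.
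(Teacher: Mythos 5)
Your proof is correct, but it takes a genuinely different route from the paper's. The paper works with the squared modulus $|\hat H(\gamma)|^2=\sum_{j,k}(\vh_j/\vh_k)^\gamma$, evaluates it at the single all-ones character, and uses \emph{column} orthogonality (each $\vh_j/\vh_k$ with $j\ne k$ has exactly $n/2$ entries equal to $-1$, hence odd product when $4\nmid n$) to make the manifestly nonnegative quantity equal $-n(n-2)<0$ --- a contradiction by positivity. You instead use \emph{row} orthogonality to get the linear statement $\hat A(\gamma)=0$ on weight-$2$ characters, and then sum $\hat A$ over the order-$4$ subgroup $H=\{0,\ve_1+\ve_2,\ve_1+\ve_3,\ve_2+\ve_3\}$; the annihilator identity $\sum_{\gamma\in H}\gamma(\vx)=4\cdot 1_{H^\perp}(\vx)$ turns this into the exact count $n=4\,|\{j: a_{1j}=a_{2j}=a_{3j}\}|$. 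All steps check out: $H$ is indeed a subgroup (the three weight-$2$ elements sum pairwise to each other mod $2$), and the character-sum evaluation is standard. What each approach buys: the paper's argument is a one-character positivity obstruction that fits its broader theme of extracting information from $|\hat A(\gamma)|^2\ge 0$, whereas yours delivers divisibility directly (no need to first assume $2\mid n$, $4\nmid n$) and is really the classical three-row counting proof rendered in Fourier language --- the paper explicitly contrasts its ``book proof'' with that combinatorial argument, so you have in effect recovered the other proof through the Fourier dictionary.
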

\begin{proof}
Let $H$ be an $n\times n$ real Hadamard matrix. If $n>1$ then $n$ must clearly be even. Assume $2|n$, but $n$ is not divisible by 4.

\medskip

As described above, the columns $\vh_1, \dots \vh_n$ of $H$ can be regarded as elements of $\cg=\ZZ_2^n$ and for any $0 - 1$ vector $\gamma\in \hat{\cg}$ we have $\hat{H} (\gamma)=\sum_{j=1}^n \vh_j^\gamma$ ,and
\begin{equation}\label{hhat2}
|\hat{H} (\gamma)|^2=\sum_{j, k=1}^n (\vh_j/\vh_k)^\gamma.
\end{equation}

Clearly, $|\hat{H} (\gamma)|^2\ge 0$ for all $\gamma$. However, consider the element $\gamma=(1,1,\dots ,1)$. On the right hand side of \eqref{hhat2} we have $1$ if $j=k$, and $-1$ if $j\ne k$ (here we use the fact that 4 does not divide $n$). Therefore, the right hand side evaluates to $n-n(n-1)=-n(n-2)$, which is negative if $n>2$, a contradiction.
\end{proof}

\medskip

Let us now turn to circulant Hadamard matrices. Assume $\vu=(u_1, \dots u_n)$ is a $\pm 1$ vector which which generates a circulant Hadamard matrix $H$. Consider the function
\begin{equation}\label{Mgamma}
M(\gamma)=\vu^\gamma
\end{equation}
where $\gamma$ ranges over $\hat\cg=\ZZ_2^n$. Let $\pi_j\in \hat{\cg}$ denote the element with an entry 1 at coordinate $j$, and all other entries being 0.

\medskip

We have the following properties of the function $M$:

\begin{equation}\label{Mpm1}
M(\gamma)=\pm 1   \ \textrm{for \ all} \ \gamma \in \ZZ_2^n, \ \textrm{and} \ M(0)=1.
\end{equation}
This is trivial.

\medskip

For all $d=1, \dots n/2$, and all $\gamma \in \ZZ_2^n$ we have
\begin{equation}\label{Mtiling}
\sum_{j-k=d (mod \ n)} M(\gamma+\pi_j+\pi_k)=0.
\end{equation}
This is a consequence of the cyclic orthogonality property: $\sum_{j=1}^n u_j u_{j+d}=0$. Spelling it out:
\begin{equation*}
\sum_{j-k=d (mod \ n)} M(\gamma+\pi_j+\pi_k)=\sum_{j=1}^n \vu^{\gamma+\pi_j+\pi_{j+d}}=\vu^\gamma \sum_{j=1}^n u_ju_{j+d}=0.
\end{equation*}

\medskip

The aim is to get a contradiction from the facts \eqref{Mpm1}, \eqref{Mtiling} for $n>4$. If we just consider the conditions \eqref{Mtiling}, and regard each $M(\gamma)$ as a {\it real variable} then we have a homogenous system of linear equations with $2^n$ variables and $\frac{n}{2}2^{n}$ linear constraints. We will prove that this is an equivalent formulation of the circulant Hadamard conjecture, i.e. the existence of any non-trivial solution to this linear system of equations implies the existence of a circulant Hadamard matrix of order $n$. We will first need some intermediate lemmas.

\begin{lemma}\label{trivS}
The circulant Hadamard conjecture is true for $n$ if and only if the $n$-variable equation
\begin{equation}\label{S}\sum_{d=1}^{n-1} \left (\sum_{j=1}^n u_ju_{j+d}\right )^2=0
\end{equation}
admits no such solution where each variable $u_j$ assumes $\pm 1$ value.
\end{lemma}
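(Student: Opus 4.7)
The plan is to reduce both sides of the equivalence to the single explicit condition that every non-trivial periodic autocorrelation of $\vu$ vanishes. Introduce $c_d := \sum_{j=1}^n u_j u_{j+d}$ for $d=1,\dots,n-1$ (indices mod $n$), so that equation \eqref{S} reads $\sum_{d=1}^{n-1} c_d^2 = 0$. Since each $c_d$ is a real integer, this sum of squares vanishes if and only if $c_d=0$ for every such $d$; that is the first, essentially trivial, reduction.

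Next I would check that this condition on $\vu$ is exactly the circulant Hadamard condition. The circulant matrix $C$ generated by $\vu$ has entries $c_{i,k}=u_{k-i+1\,(\bmod\,n)}$, so the inner product of rows $i$ and $k$ is
\begin{equation*}
\sum_{j=1}^n u_{j-i+1}\, u_{j-k+1} \;=\; \sum_{\ell=1}^n u_\ell\, u_{\ell+(i-k)} \;=\; c_{(i-k)\bmod n}.
\end{equation*}
Thus $C$ is Hadamard precisely when $c_d=0$ for all $d\in\{1,\dots,n-1\}$. Combining with the previous step, a $\pm 1$ vector $\vu$ solves \eqref{S} if and only if it generates a circulant Hadamard matrix of order $n$.

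Therefore the nonexistence of $\pm 1$ solutions to \eqref{S} is equivalent to the nonexistence of an $n\times n$ circulant Hadamard matrix, which is the statement of the circulant Hadamard conjecture for $n$. There is no real obstacle here: the only care needed is to keep track of the cyclic index shifts in the row inner product and to observe that a sum of squares of reals is zero iff each summand is. Note that the symmetry $c_d = c_{n-d}$ means the terms in \eqref{S} are redundant (each autocorrelation appears twice, apart from $d=n/2$ when $n$ is even), but this redundancy is harmless for the equivalence.
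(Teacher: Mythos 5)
Your proposal is correct and fills in exactly the details the paper omits: the paper's own proof of Lemma \ref{trivS} is just the single line ``This is trivial,'' and your argument (a sum of squares of integers vanishes iff each autocorrelation $c_d$ vanishes, which is precisely the pairwise orthogonality of the rows of the circulant matrix) is the intended reasoning. No substantive difference in approach.
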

\begin{proof}
This is trivial.
\end{proof}

\medskip

While the above lemma is trivial, it can be combined with the system of equations \eqref{Mtiling}.  Let $S: \hat{G_0}\to \RR$ denote the function defined by the coefficients on the left-hand side of \eqref{S}, i.e.
\begin{equation}\label{Sdef}\sum_{d=1}^{n-1} \left (\sum_{j=1}^n u_ju_{j+d}\right )^2=\sum_\gamma S(\gamma)\vu^\gamma .
\end{equation}

\medskip

Similar to \eqref{Mtiling} we can now write a system of linear equations involving $S$: if $\vu$ generates a cyclic Hadamard matrix then $M(\gamma)=\vu^\gamma$ satisfies the following equations:
\begin{equation}\label{Stiling}
\sum_\rho M(\gamma+\rho)S(\rho)=0  \ \textrm{for \ all} \ \gamma \in \ZZ_2^n.
\end{equation}

\medskip

\begin{lemma}\label{Sequiv}
There exists a $\pm 1$ vector $\vu$ generating a cyclic Hadamard matrix if and only if the homogenous system of linear equations \eqref{Stiling} admits a non-trivial solution $M(\gamma)$.
\end{lemma}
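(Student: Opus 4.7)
The plan is to reinterpret the system \eqref{Stiling} as a convolution equation $M \ast S = 0$ on $\hat{\cg} = \ZZ_2^n$ and pass to its Walsh dual, where convolution becomes pointwise multiplication against the non-negative function
\begin{equation*}
f(\vu) \;:=\; \sum_{d=1}^{n-1} \Bigl( \sum_{j=1}^n u_j u_{j+d} \Bigr)^2 \;=\; \sum_\rho S(\rho)\, \vu^\rho,
\end{equation*}
whose zero set on $\cg$ is, by Lemma \ref{trivS}, precisely the set of generating vectors of circulant Hadamard matrices.

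For the forward direction, given a circulant Hadamard generator $\vu$, I set $M(\gamma) := \vu^\gamma$ and verify directly that
\begin{equation*}
\sum_\rho M(\gamma + \rho) S(\rho) \;=\; \vu^\gamma \sum_\rho S(\rho)\, \vu^\rho \;=\; \vu^\gamma \cdot f(\vu) \;=\; 0,
\end{equation*}
using \eqref{Sdef} and the cyclic orthogonality $f(\vu) = 0$. This $M$ is manifestly non-trivial since $M(0) = 1$.

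For the converse, suppose $M : \hat{\cg} \to \RR$ is any non-trivial solution of \eqref{Stiling}. I form its Walsh inversion on $\cg$,
\begin{equation*}
F(\vu) \;:=\; \sum_\gamma M(\gamma)\, \vu^\gamma, \qquad \vu \in \cg,
\end{equation*}
and expand the pointwise product $f(\vu) F(\vu) = \sum_{\rho, \gamma} S(\rho) M(\gamma)\, \vu^{\rho+\gamma}$. Grouping by $\eta = \rho + \gamma$, the coefficient of $\vu^\eta$ is exactly $\sum_\rho S(\rho) M(\eta + \rho)$, which vanishes by \eqref{Stiling}; hence $f \cdot F \equiv 0$ as a function on $\cg$. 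Since $M \not\equiv 0$, Walsh orthogonality (the characters $\vu \mapsto \vu^\gamma$ form a basis of functions on $\cg$) forces $F \not\equiv 0$, so some $\vu_0 \in \cg$ has $F(\vu_0) \neq 0$, whence $f(\vu_0) = 0$. Because $f$ is a finite sum of real squares, each summand $\sum_j (u_0)_j (u_0)_{j+d}$ must vanish for $d = 1, \ldots, n-1$, and by Lemma \ref{trivS} the vector $\vu_0$ generates a circulant Hadamard matrix.

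The only genuine subtlety — the step requiring the most care — is bookkeeping the duality between $\cg$ and $\hat{\cg}$ (both abstractly $\ZZ_2^n$) and recognizing that $M \ast S = 0$ on $\hat{\cg}$ dualizes to $F \cdot f \equiv 0$ on $\cg$. Once that is in place, the entire argument reduces to the single structural fact that $f \geq 0$ vanishes precisely on circulant Hadamard generators, so a nontrivial $F$ is forced to be nonzero at such a generator.
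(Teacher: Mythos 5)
Your proof is correct and follows essentially the same route as the paper: you recognize the left-hand side of \eqref{Stiling} as the convolution $S\ast M$ on $\hat\cg$, dualize to get $\hat S\cdot\hat M\equiv 0$ on $\cg$ (your $f\cdot F\equiv 0$), and use $M\not\equiv 0$ to force a zero of $\hat S$, i.e.\ a solution of \eqref{S}. The only difference is that you spell out the convolution theorem and the final appeal to the sum-of-squares structure explicitly, where the paper leaves these steps implicit.
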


\begin{proof}
If $\vu$ generates a cyclic Hadamard matrix then $M(\gamma)=\vu^\gamma$ satisfies \eqref{Stiling}, yielding a non-trivial solution. In the converse direction, assume $M(\gamma)$ is a non-trivial solution to \eqref{Stiling}. Notice that the left hand side of \eqref{Stiling} is the convolution $S\ast M$ of the functions $S$ and $M$ on the group $\hat \cg$. This means that the convolution  $S\ast M \equiv 0$ on $\hat \cg$. As $M$ is assumed not to be identically zero, taking Fourier transform again we conclude that $\hat S$ must have a zero on $\cg$. This means exactly that there exist a solution $\vu$ to the equation \eqref{S}.
\end{proof}

We can now prove that the linear system of equations \eqref{Mtiling} is an equivalent formulation of the circulant Hadamard conjecture.

\begin{lemma}\label{nolostinf}
Regard each $M(\gamma)$ as a real variable, and consider the system of linear equations determined by \eqref{Mtiling}. The circulant Hadamard conjecture is true for $n$ if and only if this system of equations has full rank, i.e. the only solution is $M(\gamma)=0$ for each $\gamma$.
\end{lemma}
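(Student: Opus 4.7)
The plan is to reduce the converse direction to Lemma~\ref{Sequiv} by showing that any solution of the system \eqref{Mtiling} is automatically a solution of \eqref{Stiling}; the forward direction is essentially free.

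For the ``only if'' direction, if $\vu$ generates a circulant Hadamard matrix then $M(\gamma)=\vu^\gamma$ satisfies \eqref{Mtiling} (as already verified in the paragraphs preceding the lemma) and is non-trivial since $M(0)=1$. Hence the system does not have full rank.

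For the converse, starting from a non-trivial solution $M$ of \eqref{Mtiling} I would aim to establish the single convolution identity $(S\ast M)(\gamma)=0$ for every $\gamma\in\hat{\cg}$; once this is in hand, Lemma~\ref{Sequiv} produces a $\pm 1$ generator $\vu$ of a circulant Hadamard matrix of order $n$. The key step is to expand $S$ from its definition \eqref{Sdef}: the coefficient $S(\rho)$ counts the triples $(d,j,k)$ with $1\le d\le n-1$ and $1\le j,k\le n$ such that $\rho=\pi_j+\pi_{j+d}+\pi_k+\pi_{k+d}$ in $\hat{\cg}$. Consequently
\begin{equation*}
\sum_\rho M(\gamma+\rho)S(\rho) = \sum_{d=1}^{n-1}\sum_{j=1}^n\sum_{k=1}^n M\bigl(\gamma+\pi_j+\pi_{j+d}+\pi_k+\pi_{k+d}\bigr),
\end{equation*}
and for each fixed pair $(d,j)$ the innermost sum over $k$ is precisely an instance of \eqref{Mtiling} with $\gamma$ replaced by $\gamma+\pi_j+\pi_{j+d}$, hence vanishes.

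The only technical nuisance I anticipate is the mismatch of index ranges: \eqref{Mtiling} is stated for $d=1,\dots,n/2$, whereas the expansion above runs $d$ up to $n-1$. This is cosmetic: the substitution $k\mapsto k-d$ modulo $n$ shows that the equation for $d$ coincides with the one for $n-d$, so \eqref{Mtiling} is in fact valid for every non-zero $d$ (with the customary doubling at $d=n/2$, which does not affect the vanishing of the inner sum). Beyond this bookkeeping I do not foresee a genuine obstacle; the substance of the argument is simply the observation that the quadratic form $\sum_d(\sum_j u_ju_{j+d})^2$ repackages the individual cyclic orthogonality equations \eqref{Mtiling} as the single convolution $S\ast M$, after which Lemma~\ref{Sequiv} does the remaining work.
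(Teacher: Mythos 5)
Your proof is correct and follows essentially the same route as the paper: both directions reduce the converse to Lemma~\ref{Sequiv} by observing that a solution of \eqref{Mtiling} automatically solves \eqref{Stiling}. The paper dismisses that implication with a bare ``a fortiori,'' whereas you correctly supply the justification --- expanding $(S\ast M)(\gamma)$ as a sum of instances of \eqref{Mtiling} and handling the $d\leftrightarrow n-d$ symmetry --- so your write-up is a faithful, slightly more detailed version of the paper's argument.
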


\begin{proof}
One direction is trivial: if $\vu$ generates a circulant Hadamard matrix then $M(\gamma)=\vu^\gamma$ is a non-trivial solution to \eqref{Mtiling}.

\medskip

Conversely, if there exists a non-trivial solution $M(\gamma)$ of \eqref{Mtiling} then $M$ is a fortiori a solution of \eqref{Stiling}, and therefore a circulant  Hadamard matrix exists by Lemma \ref{Sequiv}.
\end{proof}

\medskip

While all the results above are fairly trivial, they do have some {\it philosophical} advantages. First, we can rest assured that Ryser's circulant Hadamard conjecture can be proved or disproved in this manner -- we have not lost any information by setting up the system \eqref{Mtiling}. Second, the circulant Hadamard conjecture is a non-existence conjecture, which can now be transformed to an existence result (i.e. it is enough to exhibit a {\it witness} which proves the non-existence of circulant Hadamard matrices):

\begin{corollary}\label{witness}
The circulant Hadmard conjecture is true for $n$ if and only if there exists real weights $c_{\gamma, d}$ such that
\begin{equation}\label{wit}
\sum_{\gamma, d} c_{\gamma, d}\left (\sum_{j-k= d (mod \ n)} M(\gamma+\pi_j+\pi_k) \right )=M(0)
\end{equation}
\end{corollary}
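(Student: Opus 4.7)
The plan is to deduce the corollary from Lemma \ref{nolostinf} via a standard finite-dimensional duality argument; nothing deeper is required. First I would fix the linear-algebraic setup: treat each $M(\gamma)$, $\gamma\in\hat\cg$, as a coordinate on $\RR^{2^n}$, and regard each instance of \eqref{Mtiling} as a linear functional
$$L_{\gamma,d}(M):=\sum_{j-k\equiv d\,(\mathrm{mod}\,n)} M(\gamma+\pi_j+\pi_k)$$
on this space. By Lemma \ref{nolostinf}, the conjecture holds for $n$ iff $\bigcap_{\gamma,d}\ker L_{\gamma,d}=\{0\}$, which by the usual rank--nullity duality is equivalent to the family $\{L_{\gamma,d}\}$ spanning the entire dual space $(\RR^{2^n})^{\ast}$.

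The forward direction is then immediate: if the conjecture holds, the evaluation functional $\delta_0\colon M\mapsto M(0)$ lies in the span of $\{L_{\gamma,d}\}$, and any expansion of $\delta_0$ in that family furnishes the sought coefficients $c_{\gamma,d}$; equation \eqref{wit} is precisely this expansion read as an identity in the abstract variables $M(\gamma)$.

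For the converse, suppose such $c_{\gamma,d}$ exist and let $M$ be any solution of \eqref{Mtiling}. Plugging it into \eqref{wit} forces $M(0)=0$. To upgrade this to $M\equiv 0$, I would invoke the translation invariance of \eqref{Mtiling}: rewriting the constraints as convolutions $f_{d}\ast M\equiv 0$ on $\hat\cg$ (with $f_d(\rho):=\#\{(j,k):j-k\equiv d,\ \pi_j+\pi_k=\rho\}$), one sees that if $M$ solves the system then so does the shift $M(\,\cdot\,+\gamma_0)$ for every $\gamma_0\in\hat\cg$. Applying the previous observation to each such shift yields $M(\gamma_0)=0$ for every $\gamma_0$, so $M\equiv 0$, and Lemma \ref{nolostinf} finishes the argument.

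There is no real obstacle in the proof itself --- the content is purely formal duality, and the only thing to be careful about is reading \eqref{wit} as an identity of linear functionals (rather than an identity at a single prescribed $M$) and remembering to exploit shift-invariance in the converse. The genuine difficulty that the corollary highlights, of course, lies elsewhere: producing such weights $c_{\gamma,d}$ explicitly for every $n>4$ of the form $4u^{2}$ is precisely the program this paper is proposing.
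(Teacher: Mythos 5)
Your proposal is correct and follows essentially the same duality argument as the paper: both directions come down to whether the functionals $L_{\gamma,d}$ span the full dual space, mediated by Lemma \ref{nolostinf}. The only difference is in the direction where the weights are given: the paper simply observes that a circulant Hadamard matrix would yield a solution of \eqref{Mtiling} with $M(0)=1$, contradicting \eqref{wit}, whereas you take the slightly longer (but equally valid) route of using shift-invariance to force $M\equiv 0$ and then invoking Lemma \ref{nolostinf} once more.
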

\begin{proof}
If such weights exist, then \eqref{Mtiling} cannot admit a solution in which $M(0)=1$, and hence there cannot exist a circulant Hadamard matrix of order $n$. Conversely, if such weights do not exist then the linear system \eqref{Mtiling} does not have full rank, so a circulant Hadamard matrix of order $n$ exists by Lemma \ref{nolostinf}.
\end{proof}

\medskip

Therefore we are left with the "simple" task of exhibiting a witness (a set of weights $c_{\gamma, d}$) for each $n$. It is possible to obtain such witnesses by computer for small values of $n$, i.e. $n=8,12, 16, 20, 24$. The problem is that there are always an infinite number of witnesses (a whole affine subspace of them with large dimension), and one should somehow select the "nicest" one, which could be generalized for any $n$.

\medskip

It is natural to exploit the invariance properties of the problem as follows. If $M(\gamma)$ is a non-trivial solution to \eqref{Mtiling} then so is $M_\pi(\gamma)=M(\pi(\gamma))$ where $\pi$ is any cyclic permutation of the coordinates. We can therefore define equivalence classes in $\hat\cg$, regarding $\gamma_1$ and $\gamma_2$ equivalent if they are cyclic permutations of each other. After averaging we can then assume that the required weights $c_{\gamma, d}$ are constant on equivalence classes. Furthermore, if $1\le k \le n-1$ is relatively prime to $n$ then multiplication by $k$ defines an automorphism of the cyclic group $\ZZ_n$. We can regard $\gamma_1$ and $\gamma_2$ equivalent if a coordinate transformation corresponding to multiplication by some $k$ transforms one to the other. Similarly, we can regard $d_1$ and $d_2$ equivalent if GCD($d_1, n$)=GCD($d_2, n$). After averaging again, we can assume that the required witness weights $c_{\gamma, d}$ depend only on the equivalence class of $\gamma$ and that of $d$. However, such restrictions still do not determine the weights $c_{\gamma, d}$ uniquely, and still the witnesses form an affine subspace of large dimension.

\medskip

It is also easy to see that we may restrict our attention without loss of generality to the subgroup $\hat \cg_0=\{\gamma\in \hat \cg : \sum_{j=1}^n\gamma_j\equiv 0 \ (mod \ 2)\}$, because all the terms on the left hand side of \eqref{Mtiling} stay in $\hat \cg_0$ if $\gamma\in \hat \cg_0$. We will call $\sum_{j=1}^n\gamma_j$ the {\it weight} of $\gamma$, and denote it by $|\gamma|$.

\medskip

In the last section of this note we will consider {\it symmetric} polynomials of the variables $u_j$, i.e. expressions of the form
\begin{equation}\label{symw}
\sum_{2|w=0}^n \sum_{|\gamma| = w} d_w M(\gamma).
\end{equation}
That is, only $\gamma\in \hat \cg_0$ are considered in the sum, and the coefficient of $M(\gamma)$ depends on the weight of $\gamma$ only. It is trivial to see that such expressions form a vector space of dimension $\frac{n}{2}+1$, a natural basis of which is given by the single-weight expressions
\begin{equation}\label{sw}
\sum_{|\gamma| = w}  M(\gamma), \ \ \ w=0, 2, 4, \dots n.
\end{equation}
One way to generate an expression of the form \eqref{symw} using the equations \eqref{Mtiling} is the following:
\begin{equation}\label{trivw}
\sum_{|\gamma| = w}  \sum_{d=1}^{n/2}\sum_{j-k= d (mod \ n)}M(\gamma+\pi_j+\pi_k), \ \ \ w=0, 2, 4, \dots n.
\end{equation}
\medskip

\begin{lemma}\label{sdim}
If $4$ divides $n$ then the dimension of the subspace spanned by the expressions \eqref{trivw} in the vector space of the expressions of the form \eqref{symw} is $\frac{n}{2}+1$ if $n\ne 4u^2$, while it is $\frac{n}{2}$ if $n=4u^2$.
\end{lemma}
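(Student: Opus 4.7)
The plan is to reduce \eqref{trivw} modulo \eqref{Mtiling} to an element of the space $V_{\mathrm{sym}}$ of expressions of the form \eqref{symw}, and then to compute the rank of a simple operator on $V_{\mathrm{sym}}$.

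Using $M(\gamma+\pi_j+\pi_k)=\vu^\gamma u_j u_k$ one factors
\[
\sum_{|\gamma|=w}\sum_{d=1}^{n/2}\sum_{j-k\equiv d}M(\gamma+\pi_j+\pi_k)\;=\;e_w(\vu)\cdot A(\vu),
\]
with $e_w(\vu):=\sum_{|\gamma|=w}\vu^\gamma$ and $A(\vu):=\sum_{d=1}^{n/2}\sum_{j-k\equiv d}u_j u_k$. A brief count by cyclic distance (each unordered pair at distance $d<n/2$ is counted once, each antipodal pair twice) yields $A=e_2+B$ with $B(\vu):=\sum_{j=1}^{n/2}u_j u_{j+n/2}$. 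Expanding, $e_w(\vu)\cdot B(\vu)=\sum_{|\gamma|=w}\sum_{j=1}^{n/2}M(\gamma+\pi_j+\pi_{j+n/2})$. The identity $\sum_{j-k\equiv n/2}M(\gamma+\pi_j+\pi_k)=2\sum_{j=1}^{n/2}M(\gamma+\pi_j+\pi_{j+n/2})$ shows that $e_w\cdot B$ is half the sum over $|\gamma|=w$ of the $d=n/2$ cases of \eqref{Mtiling}, and so vanishes modulo \eqref{Mtiling}. Hence modulo \eqref{Mtiling}, \eqref{trivw} coincides with the symmetric element $e_w\cdot e_2\in V_{\mathrm{sym}}$.

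The subspace in question is therefore the image of the map $L\colon V_{\mathrm{sym}}\to V_{\mathrm{sym}}$, $L(p)=e_2\cdot p$, whose rank equals $\tfrac{n}{2}+1-\dim\ker L$. Viewing each $\vu^\gamma$ as the function $\vv\mapsto\vv^\gamma$ on $\{\pm 1\}^n$ (the evaluation isomorphism), polynomial multiplication becomes pointwise multiplication, so $\ker L$ is the space of functions supported on $Z:=\{\vv:e_2(\vv)=0\}$. Writing $s(\vv)=v_1+\cdots+v_n$, the identity $e_2(\vv)=\tfrac12(s(\vv)^2-n)$ shows that $Z$ is non-empty iff $s^2=n$ has an integer solution of the parity of $n$; under $4\mid n$ this occurs precisely when $n=4u^2$, and then $Z=\{\vv:s(\vv)=\pm 2u\}$.

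To finish I intersect $\ker L$ with $V_{\mathrm{sym}}$: elements of $V_{\mathrm{sym}}$ viewed as functions on $\{\pm 1\}^n$ are invariant both under $S_n$ and under the sign flip $\vv\mapsto-\vv$, hence depend only on $|s(\vv)|$. If $n\ne 4u^2$ then $Z$ is empty, so $\dim\ker L=0$ and the rank is $\tfrac{n}{2}+1$. If $n=4u^2$, a function of $|s|$ supported on $\{s=\pm 2u\}$ is determined by its single value at $|s|=2u$ (the sign flip forces the values at $s=2u$ and $s=-2u$ to agree), so $\dim\ker L=1$ and the rank is $\tfrac{n}{2}$. The most delicate step is the reduction, i.e.\ confirming that $e_w\cdot B$ lies in the linear span of the $d=n/2$ cases of \eqref{Mtiling}; the remainder is linear algebra once multiplication by $e_2$ has been identified as the key operator.
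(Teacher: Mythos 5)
Your proof is correct, and it takes a genuinely different route from the paper's, which simply expands each expression \eqref{trivw} in the single-weight basis \eqref{sw}, observes that only the weights $w-2,w,w+2$ occur, and asserts the rank of the resulting tridiagonal $(\frac{n}{2}+1)\times(\frac{n}{2}+1)$ matrix, leaving ``the explicit calculations to the reader.'' You instead identify the relevant operator as multiplication by $e_2$ on the space of even symmetric multilinear polynomials and compute its rank by diagonalization: under the evaluation isomorphism the kernel consists of the functions of $|s|$ supported on the zero set of $e_2=\frac{1}{2}(s^2-n)$, which is empty unless $s^2=n$ is solvable, i.e.\ (given $4\mid n$) unless $n=4u^2$, in which case it is one-dimensional. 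This is the same matrix the paper writes down, but analyzed in its eigenbasis rather than by elimination, and it actually supplies the rank computation the paper omits while making transparent \emph{why} the rank drops by exactly one precisely at square $n$. You also catch and repair a genuine imprecision that the paper glosses over: because the ordered sum over $d=1,\dots,n/2$ counts each antipodal pair twice, \eqref{trivw} equals $e_w\cdot(e_2+B)$ with $B=\sum_{j\le n/2}u_ju_{j+n/2}$, so it is not literally an element of the space \eqref{symw}; your observation that $e_w\cdot B$ is half the sum of the $d=n/2$ instances of \eqref{Mtiling}, hence lies in the span of those relations, is exactly what is needed to make the statement (and its use in the subsequent lemma, where only membership of $M(0)$ in the span of the relations matters) rigorous. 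The one small step you leave implicit is that $V_{\mathrm{sym}}$ equals the full space of functions of $|s|$ rather than a proper subspace, but this follows at once from the dimension count $\frac{n}{2}+1$ on both sides.
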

\begin{proof}
It is easy to see that for any $w$ the left hand side of the expression \eqref{trivw} will contain variables $M(\gamma)$ where the weight $|\gamma|$ is $w-2, w$ or $w+2$. It is therefore easy to express \eqref{trivw} in the basis \eqref{sw} explicitly, as a vector of length $\frac{n}{2}+1$ with only 3 non-zero coordinates. This leads to a tri-diagonal matrix whose rank is $\frac{n}{2}+1$ if $n\ne 4u^2$, while it is $\frac{n}{2}$ if $n=4u^2$. The explicit calculations are left to the reader.
\end{proof}

This lemma leads to the following well-known corollary:

\begin{lemma}
If there exists a cyclic Hadamard matrix of order $n$ then $n$ must be an even square number, $n=4u^2$.
\end{lemma}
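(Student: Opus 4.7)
The plan is to combine Proposition \ref{had} with Lemma \ref{sdim}, exploiting the fact that every expression of the form \eqref{trivw} is built solely from instances of the orthogonality relations \eqref{Mtiling}, and therefore must vanish whenever $M(\gamma)=\vu^\gamma$ for a vector $\vu$ generating a circulant Hadamard matrix.

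First I would invoke Proposition \ref{had} to conclude that $4\mid n$ (the case $n=1,2$ being irrelevant since they cannot support circulant Hadamard matrices of order larger than the trivial ones). So we are in the regime where Lemma \ref{sdim} applies. Next, suppose for contradiction that $n\neq 4u^2$ for any integer $u$, so that by Lemma \ref{sdim} the expressions \eqref{trivw} span the full $\tfrac{n}{2}+1$-dimensional space of symmetric expressions of the form \eqref{symw}, with the single-weight basis \eqref{sw}.

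Now suppose $\vu$ generates a circulant Hadamard matrix of order $n$, and set $M(\gamma)=\vu^\gamma$. Each of the inner sums in \eqref{trivw} is an instance of \eqref{Mtiling} and hence evaluates to $0$; consequently every one of the $\tfrac{n}{2}+1$ expressions \eqref{trivw} equals $0$. Since these span the entire space of symmetric expressions \eqref{symw}, in particular each basis element \eqref{sw} must also evaluate to $0$. But the $w=0$ basis element is simply $M(0)=\vu^0=1\neq 0$, which is the desired contradiction. Hence $n=4u^2$ for some positive integer $u$, i.e.\ $n$ is an even perfect square.

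The only step requiring care is the verification that the tridiagonal structure in Lemma \ref{sdim} really is invoked correctly: namely, that under the hypothesis $n\neq 4u^2$ the span of \eqref{trivw} is full rank. This is already granted by Lemma \ref{sdim}, so the present proof reduces to the short deduction above; no additional calculation is needed beyond noting that the $w=0$ basis vector always evaluates to $1$ on any $M$ coming from a genuine $\pm 1$ vector $\vu$.
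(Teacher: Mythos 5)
Your proposal is correct and follows essentially the same route as the paper: invoke Proposition \ref{had} to get $4\mid n$, then use Lemma \ref{sdim} to see that for $n\neq 4u^2$ the expressions \eqref{trivw} span the whole space \eqref{symw}, hence reach $M(0)$, which contradicts the vanishing of every instance of \eqref{Mtiling} on a genuine solution. The paper phrases the final step as exhibiting a witness of the form \eqref{trivwit} (a special case of \eqref{wit}) rather than as a direct evaluation-contradiction, but the two formulations are interchangeable.
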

\begin{proof}
By Proposition \ref{had} $n$ must be divisible by 4. By Lemma \ref{sdim} we see that the expressions \eqref{trivw} generate the whole space of symmetric polynomials given by \eqref{symw}. In particular, the single variable $M(0)$ is in this subspace, so we conclude that there exists an expansion of the form
\begin{equation}\label{trivwit}
\sum_{|\gamma| = w} c_w \sum_{d=1}^{n/2}\sum_{j-k= d (mod \ n)}M(\gamma+\pi_j+\pi_k)=M(0),
\end{equation}
which is a special case of \eqref{wit}.
\end{proof}

One might object that this is a very difficult way of proving a very easy statement. However, it does have some advantages. First, it rhymes very well with \eqref{wit} and the strategy described in the paragraphs after Lemma \ref{witness}. Namely, put the $\gamma$'s and the $d$'s into some equivalence classes and look for a solution to \eqref{wit} such that the coefficients depend only on the equivalence classes. Second, it "nearly" works even if $n$ is a square: the span of the expressions \eqref{trivw} has dimension $\frac{n}{2}$. One could therefore hope for the following strategy to work. Let us call a linear combination on the left hand side of \eqref{trivwit} "trivial". If we could find a non-trivial linear combination \eqref{wit} such that the result is of the form \eqref{symw}, then it is "very likely" that the dimension of the span would increase to $\frac{n}{2}+1$, which would complete the proof of the general case. It is not at all clear whether such "magic" non-trivial linear combination is easy to find for general $n$, but it is not out of the question.

\end{document}